\documentclass[letterpaper,11pt]{article}
\usepackage{amsmath, amsthm, amssymb}    
\DeclareMathOperator{\arccot}{arccot}
\newtheorem{theorem}{Theorem}
\newtheorem{proposition}[theorem]{Propsition}
\newtheorem{lemma}[theorem]{Lemma}
\usepackage{bridges}
\usepackage{graphicx}
\usepackage[colorlinks=true, urlcolor=blue, citecolor=black, linkcolor=black]{hyperref}  
\usepackage[labelformat = simple]{subcaption}

\urlstyle{rm} 					

\title{Fibonacci and Lucas Sequences in Aperiodic Monotile Supertiles}

\author{Shiying Dong
\vspace{10pt}\\
Greenwich, Connecticut, USA; shiyingdong@gmail.com
}

\date{\today}				

\begin{document}

\maketitle

\thispagestyle{empty}

\begin{abstract}
This paper first discusses the size and orientation of hat supertiles. Fibonacci and Lucas sequences, as well as a third integer sequence linearly related to the Lucas sequence are involved. The result is then generalized to any aperiodic tile in the hat family. 

\end{abstract}

\section*{Introduction}
2023 saw one of the most exciting discoveries in mathematics: the aperiodic monotiles~\cite{hat}\cite{spectre}, commonly known as \emph{hat} and \emph{spectre} tiles. One of their fascinating aspects is the tiling process, which can be generated using recursively defined supertiles. Although complex, each supertile carries a vector indicating its size and natural direction. In this paper, we calculate these vectors for the supertiles in the hat family, yielding a simple expression involving Fibonacci and Lucas sequences. Moreover, we discover an integer sequence linearly related to the Lucas sequence in the rotation of the supertiles.

\section*{Supertiles}
In this section, we will review the supertile forming rules. We start from the single hat and the two-hat compound (Figure~\ref{fig:1}), with a vector called $V_1$ marked in each piece. The two pieces are arranged in Figure~\ref{fig:1} so that their $V_1$'s are parallel. 
We define the head and tail of the hat piece as the head and tail of its $V_1$—the same for the two-hat compound. Let's call these tiles \emph{hat-1} and \emph{thc-1}, the \emph{first generation of supertiles}.
\begin{figure}[h!tbp]
\centering
\begin{minipage}[b]{0.35\textwidth} 
\centering
	\includegraphics[scale = 1]{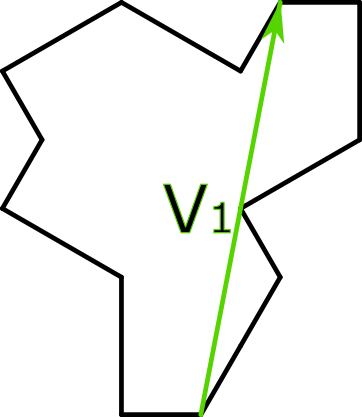}
        	\subcaption{Hat}
        	\label{fig:1a}
\end{minipage}
\hspace{3em}
\begin{minipage}[b]{0.35\textwidth} 
\centering
	\includegraphics[scale = 1]{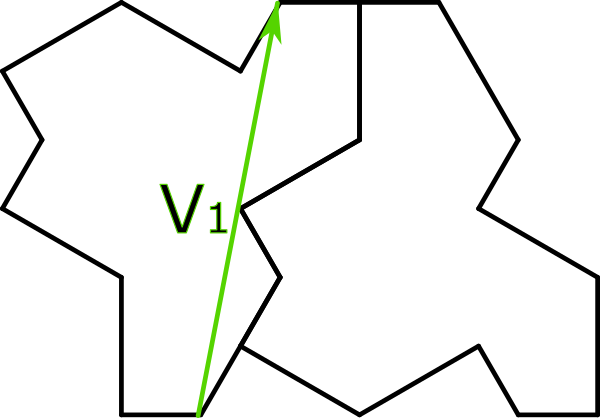}
        	\subcaption{Two-hat compound} 
	\label{fig:1b}
\end{minipage}
\caption{Marked hat tiles.}
\label{fig:1}
\end{figure}

The next generation of supertiles is formed by the following pattern. There is a two-hat compound piece and a first hat piece with its $V_1$ $120$-degree clockwise from that of the two-hat compound, and the two pieces share the same tail. The next two hat pieces have their tails at the previous hat pieces' heads and rotated $60$ degrees counterclockwise from the previous piece. The fourth is parallel to the third and is snapped in between the two-hat compound and the third hat piece. The next two hat pieces have their tails at the previous hat pieces' heads and rotated $60$ degrees counterclockwise from the previous piece. This supertile has its \emph{supervector} called $V_2$, shown in Figure~\ref{fig:2}. This forms the second generation supertile \emph{hat-2} of the hat tile. The supertile of the two-hat compound \emph{thc-2} is the same except for the missing third hat piece.
\begin{figure}[h!tbp]
\centering
\begin{minipage}[b]{0.35\textwidth} 
\centering
	\includegraphics[scale = 0.5]{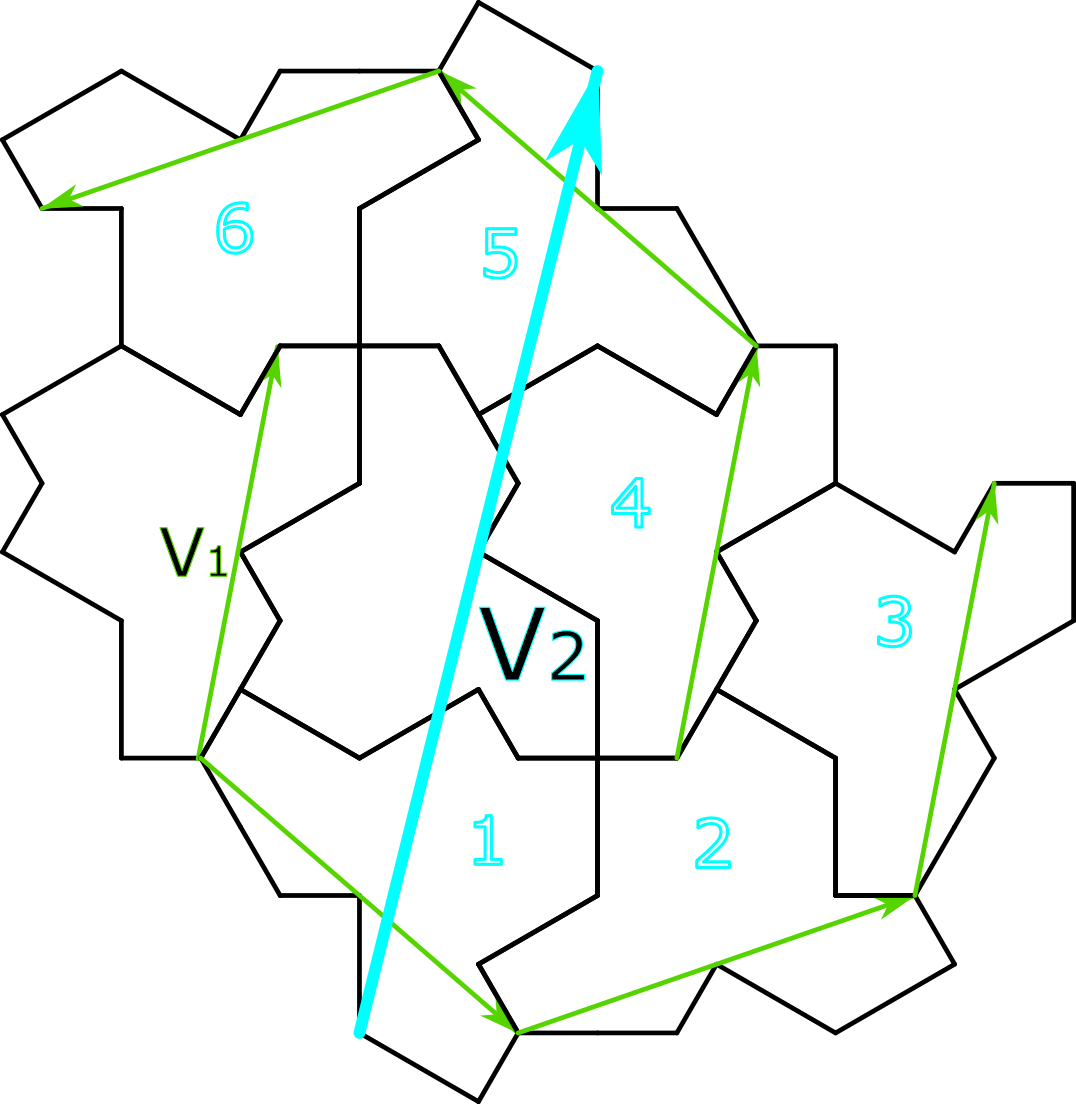}
        	\subcaption{Hat supertile \emph{hat-2}}
        	\label{fig:2a}
\end{minipage}
\hspace{3em}
\begin{minipage}[b]{0.35\textwidth} 
\centering
	\includegraphics[scale = 0.5]{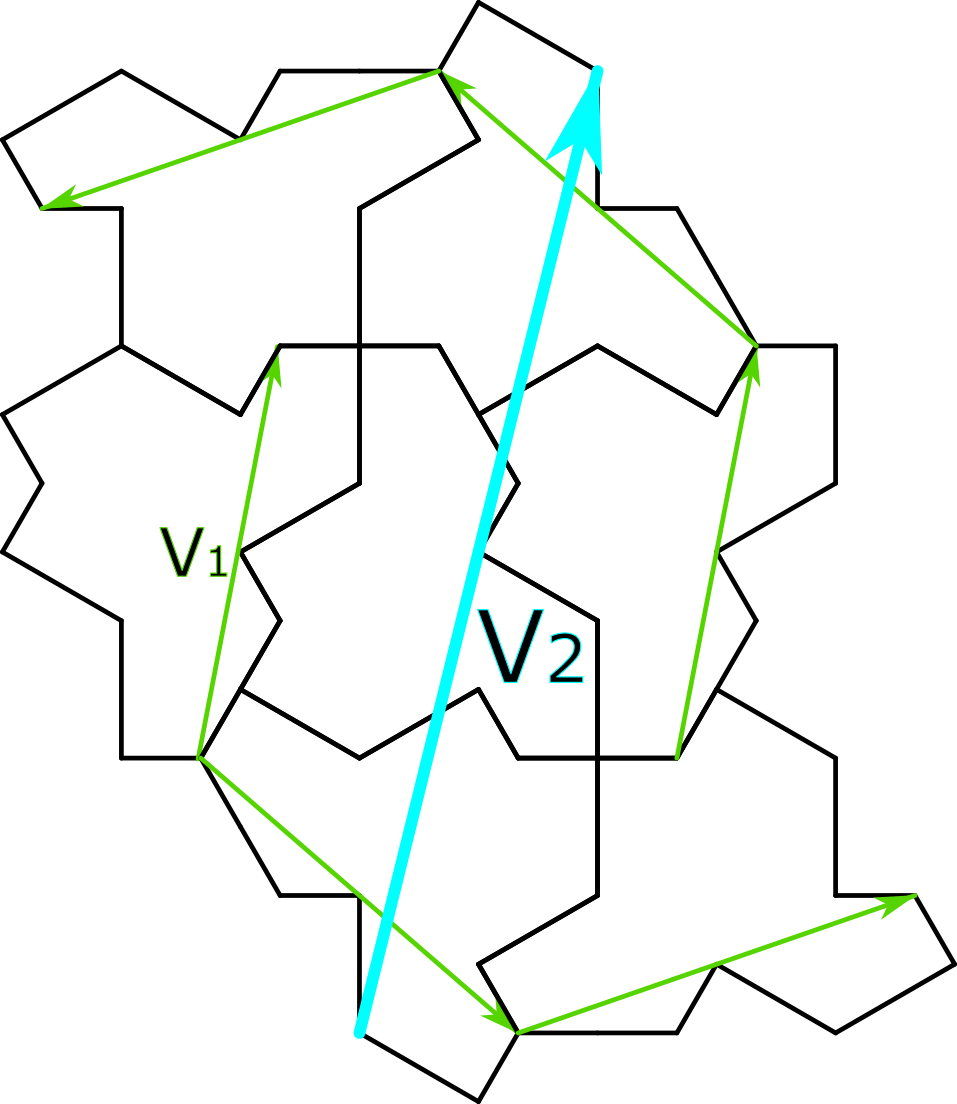}
        	\subcaption{Two-hat compound supertile \emph{thc-2}} 
	\label{fig:2b}
\end{minipage}
\caption{The second generation supertiles.}
\label{fig:2}
\end{figure}

The previous paragraph also describes how the $n$th ($n\geq 3$) generation supertiles are formed using the prior supertiles. To see how the supervectors $V_{n+1}$ are formed, we need to look at the case of $n=3$. 
From Figure~\ref{fig:3}, we can see that thc-2 and the fourth hat-2 meet in the way that the thc-1 of the latter has its supervector exactly where that of the missing third hat-1 of thc-2 would be. The second observation is that $V_3$ starts from the head of the third hat-1 of the first hat-2 and ends at the head of the third hat-1 of the fifth hat-2. In general, these two facts work for all generations above this one. 
\begin{figure}[h!tbp]
\centering
\begin{minipage}[b]{0.4\textwidth} 
\centering
	\includegraphics[scale = 0.25]{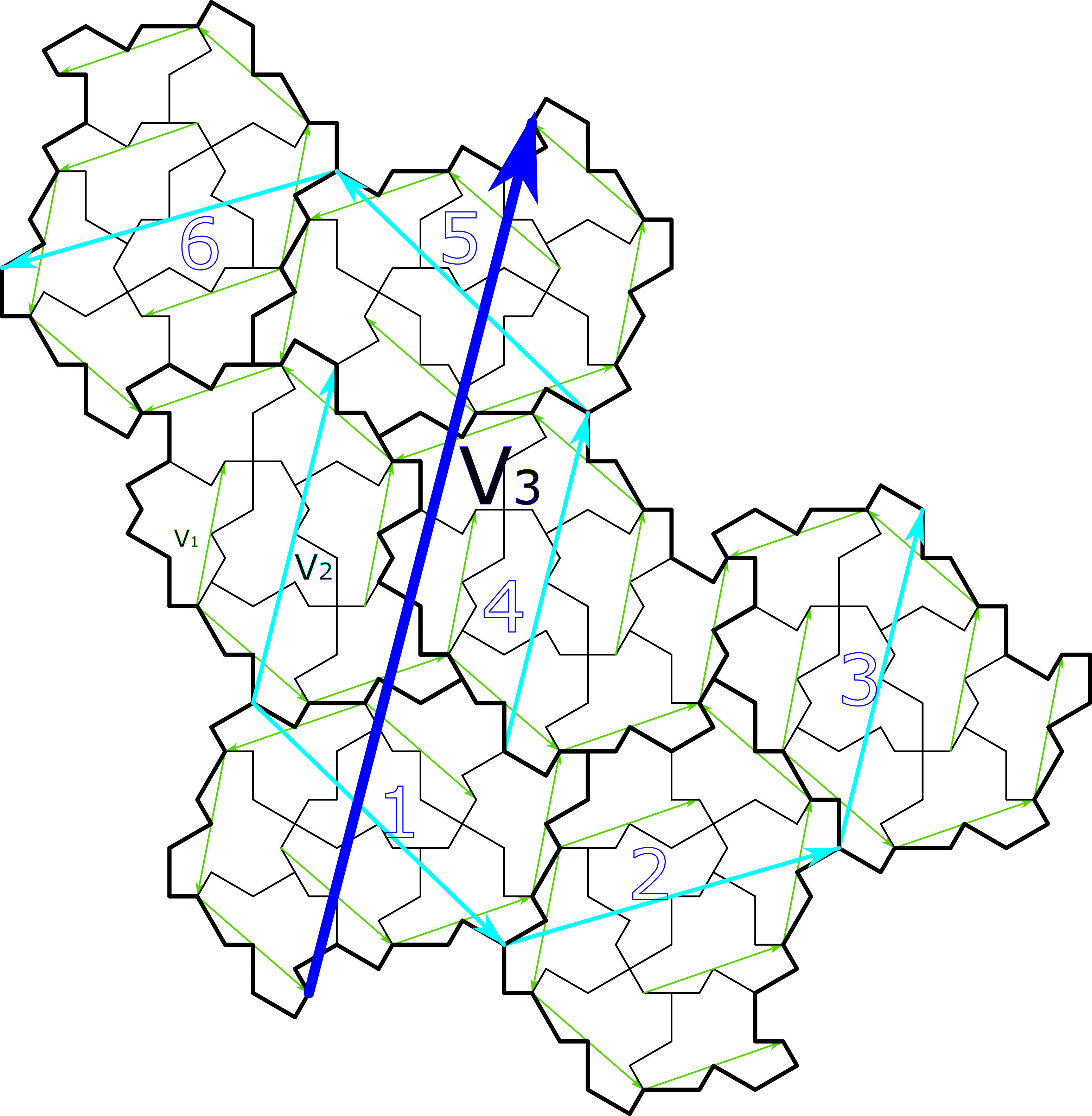}
        	\subcaption{Hat supertile \emph{hat-3}}
        	\label{fig:3a}
\end{minipage}
\begin{minipage}[b]{0.4\textwidth} 
\centering
	\includegraphics[scale = 0.25]{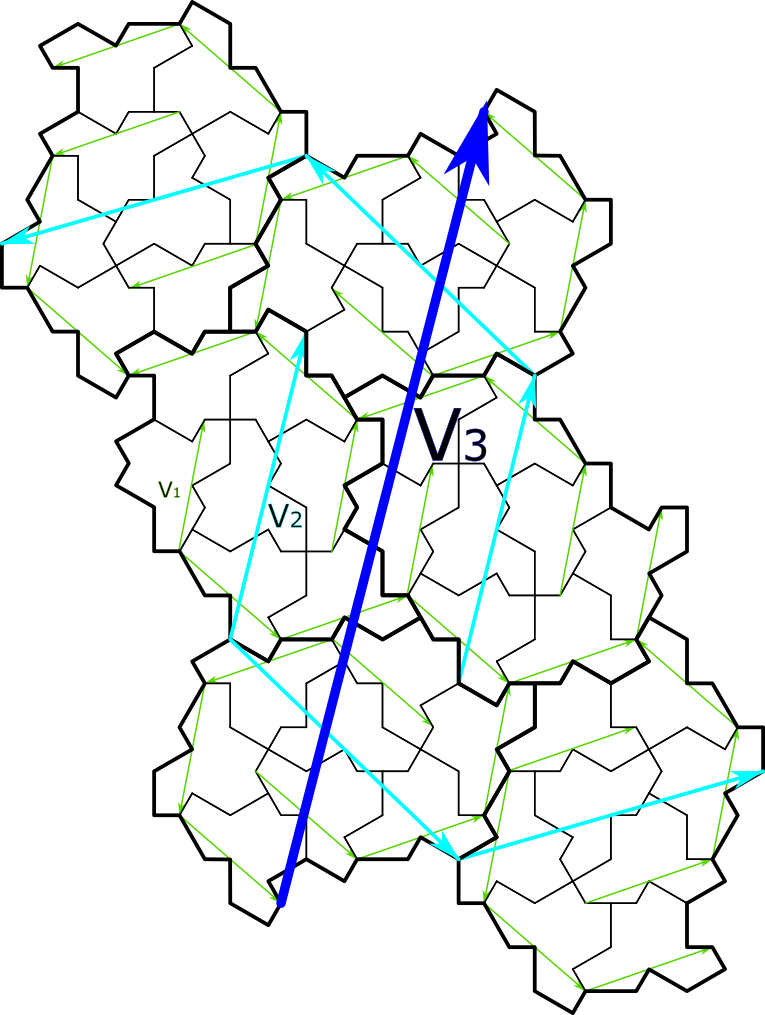}
        	\subcaption{Two-hat compound supertile \emph{thc-3}} 
	\label{fig:3b}
\end{minipage}
\caption{The third generation supertiles.}
\label{fig:3}
\end{figure}
\begin{itemize}
\item
In the $n$th supertile, thc-$(n-1)$ and the fourth hat-$(n-1)$ meet in the way that the thc-$(n-2)$ of the latter has its supervector where that of the missing third hat-$(n-2)$ of thc-$(n-1)$ would be. 
\item 
$V_n$ starts from the head of the third hat-$(n-2)$ of the first hat-$(n-1)$ and ends at the head of the third hat-$(n-2)$ of the fifth hat-$(n-1)$.
\end{itemize}

\section*{Supertile Rotaions}
At first glance, $V_1$, $V_2$ and $V_3$ look suspiciously parallel. A careful calculation shows they are not. In Figure~\ref{fig:4}, the hexagonal grid is drawn for hat-1 and hat-2. 
Let the short edge of the hat tile be 1, then it's not hard to see that
\begin{equation}
V_1 = (1, 3\sqrt{3}),\quad V_2 = (3, 7\sqrt{3}), \quad V_3 = (8, 18\sqrt{3}).
\end{equation}
\begin{figure}[h!tbp]
\centering
\begin{minipage}[b]{0.35\textwidth} 
\centering
	\includegraphics[scale = 1]{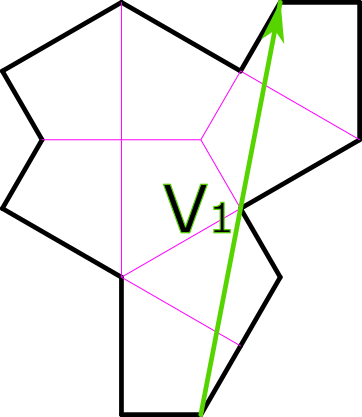}
        	\subcaption{Hat-1}
        	\label{fig:4a}
\end{minipage}
\hspace{3em}
\begin{minipage}[b]{0.35\textwidth} 
\centering
	\includegraphics[scale = 0.5]{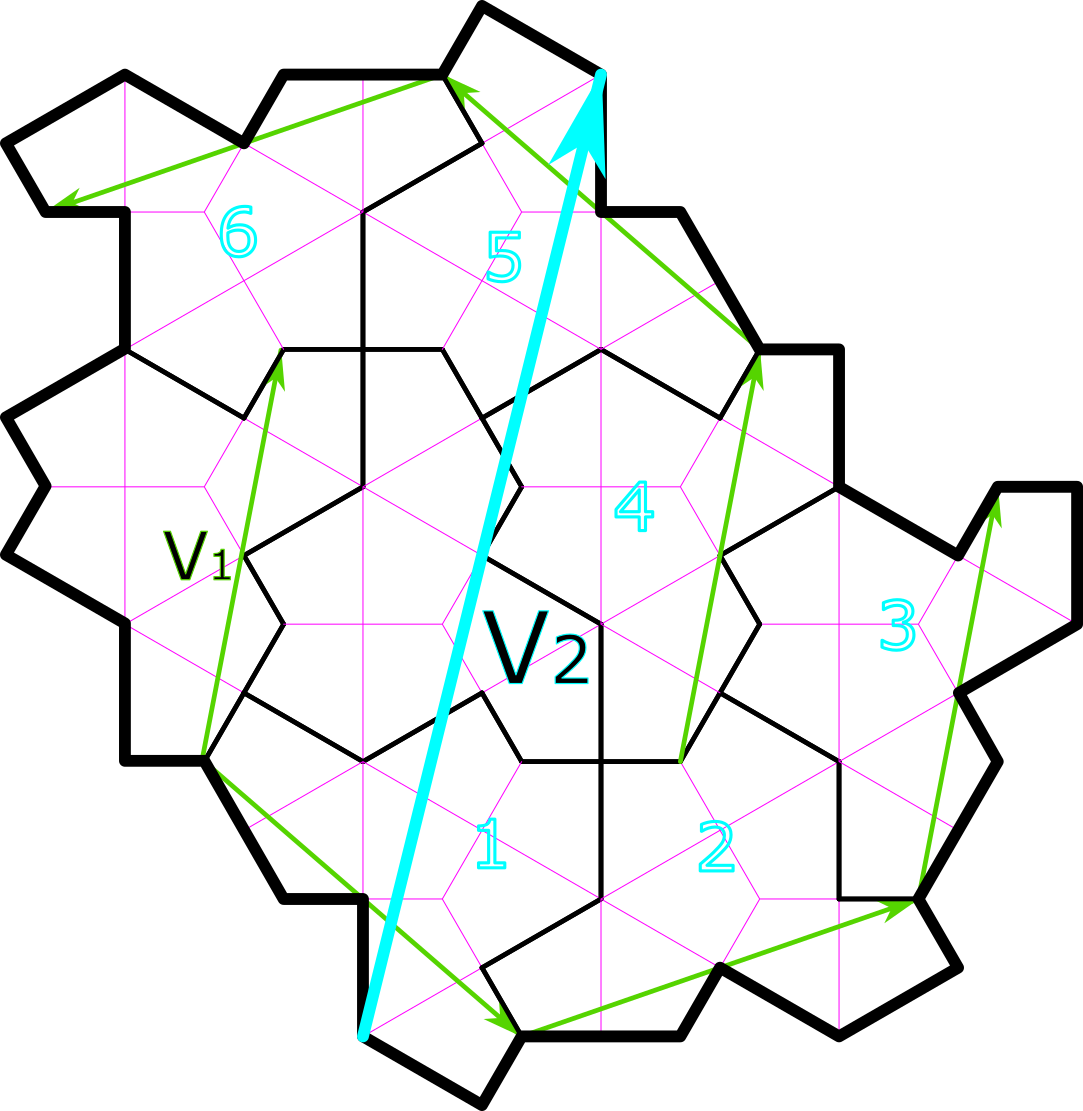}
        	\subcaption{Hat-2}
	\label{fig:4b}
\end{minipage}
\begin{minipage}[b]{0.8\textwidth} 
\centering
	\includegraphics[scale = 0.45]{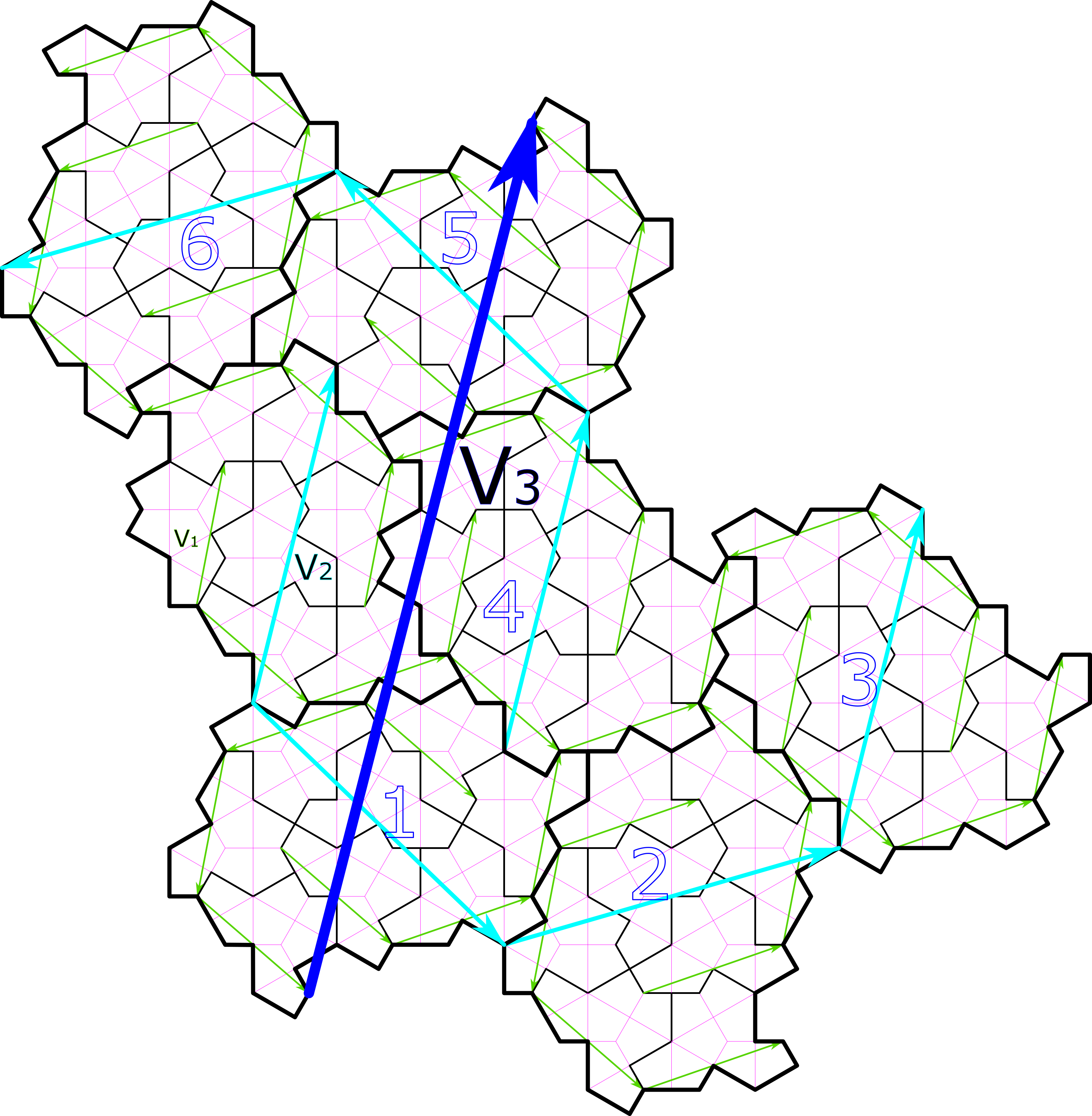}
        	\subcaption{Hat-3}
	\label{fig:4c}
\end{minipage}
\caption{Hat-1, hat-2, and hat-3 with hexagonal grid.}
\label{fig:4}
\end{figure}

Since the rule of tiling is recursive, involving \emph{two} previous generations, we expect there is a recurrence relation about $V_n$ that involves two previous terms. Indeed, we have a very simple relation
\begin{lemma}
\[V_n = 3 V_{n-1} - V_{n-2}, \quad \forall n\geq 3.\]
\label{lem:1}
\end{lemma}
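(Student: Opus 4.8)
The plan is to prove the recurrence by tracing the supervector $V_n$ through the hat-$n$ supertile, using the two structural facts recorded above together with the assembly rules, and to anchor an induction on the explicitly computed vectors $V_1,V_2,V_3$. For the base case $n=3$ one simply checks that $3V_2-V_1=(9-1,\,21\sqrt3-3\sqrt3)=(8,18\sqrt3)=V_3$, so it remains to establish the identity for $n\ge 4$.

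For each generation $m\ge 2$ I would introduce the congruence-invariant \emph{marked point} of a supertile: the head of the supervector of its third hat-$(m-1)$ sub-supertile. The second bullet point then says precisely that, inside hat-$n$, the vector $V_n$ runs from the marked point of the first hat-$(n-1)$ to the marked point of the fifth hat-$(n-1)$; applied one generation down, it says that the tail and head of $V_{n-1}$ are the marked points of the first and fifth hat-$(n-2)$ sub-pieces of that hat-$(n-1)$. Since all hat-$(n-1)$ supertiles are congruent, the marked point sits at a fixed offset $\mathbf w$ from the tail of the supervector in a reference copy; so if the first and fifth hat-$(n-1)$ inside hat-$n$ are placed by rigid motions with linear parts $A_1,A_5$, and $T_1,T_5$ denote the tails of their supervectors, then
\[
V_n \;=\; (T_5-T_1)\;+\;(A_5-A_1)\mathbf w .
\]
The difference $T_5-T_1$ is computed from the assembly chain (``the next piece has its tail at the previous piece's head, rotated $60^\circ$ counterclockwise''), with the placement of the fourth hat-$(n-1)$ and of the thc pieces pinned down by the first bullet point (the coincidence of the thc-$(n-2)$ supervector with the phantom third hat-$(n-2)$ of thc-$(n-1)$); this renders $T_5-T_1$ as a short sum of rotated copies of $V_{n-1}$. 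The offset $\mathbf w$ is in turn evaluated from the level-$(n-1)$ assembly by the same kind of chase: because the marked point and both supervector endpoints of a hat-$(n-1)$ are all heads of $V_{n-2}$'s of its sub-pieces, $\mathbf w$ reduces to a short sum of rotated copies of $V_{n-2}$. Substituting both expressions into the display, the rotation matrices should collapse the $V_{n-1}$-terms to $3V_{n-1}$ and the $V_{n-2}$-terms to $-V_{n-2}$, which is the assertion.

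I expect the main obstacle to be bookkeeping rather than an idea: one must keep careful track of orientations — in particular, whether the first and fifth hat-$(n-1)$ (and the sub-pieces entering $\mathbf w$) have the same or opposite handedness, so that $A_5-A_1$ is the correct linear map — and read off from Figures~\ref{fig:2}--\ref{fig:4} the exact relative placement of the fourth hat-$(n-1)$ and of the thc pieces. Once those placements are fixed, what remains is a finite trigonometric identity that must reduce to the stated coefficients, and it is prudent to cross-check the computation against the $n=3$ data and, ideally, against an independent direct evaluation of $V_4$.
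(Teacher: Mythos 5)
Your overall strategy --- induction anchored at $n=3$, then a decomposition of $V_n$ obtained by chasing marked points through the chain of hat-$(n-1)$ sub-pieces and computing with rotations --- has the same skeleton as the paper's proof, and the identity $V_n = (T_5-T_1) + (A_5-A_1)\mathbf{w}$ is a sound starting point. But there is a genuine gap at the decisive step, which you defer with ``the rotation matrices should collapse.'' The claim that $T_5-T_1$ is a short sum of rotated copies of $V_{n-1}$ is not correct: the increments along the head-to-tail chains (first to second to third, and fourth to fifth) are indeed rotated copies of $V_{n-1}$, but the step from the third to the fourth hat-$(n-1)$ is not. The fourth piece is ``snapped in,'' and its placement is pinned by the generation-$(n-2)$ coincidence of your first bullet point, so that displacement is an \emph{internal} offset of a hat-$(n-1)$, naturally expressed in terms of $V_{n-2}$ and $V_{n-3}$ rather than $V_{n-1}$. (In the paper this is the vector $W_{n-2}$, which after a parallel-transport argument equals $(1+e^{-i\pi/3})(v_{n-2}-v_{n-3})$ in complex notation.) Likewise your offset $\mathbf{w}$ is recursively defined --- the tail of a supervector is itself a marked point one generation down --- so reducing it to ``a short sum of rotated copies of $V_{n-2}$'' requires the same kind of parallel-transport identification and again drags in generation $n-3$.

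The consequence is structural, not mere bookkeeping: carried out correctly, the geometric decomposition yields a relation one generation lower, namely $v_n = 8v_{n-2}-3v_{n-3}$ (using $(1+e^{i\pi/3})(1+e^{-i\pi/3})=3$), and this becomes $v_n = 3v_{n-1}-v_{n-2}$ only after substituting the induction hypothesis $v_{n-1}=3v_{n-2}-v_{n-3}$. Your inductive step for $n\ge 4$ never invokes the hypothesis; as written, you are implicitly claiming that the geometry alone delivers the two-term recurrence, and that is exactly the part that the natural decomposition does not produce. To close the gap you need (i) an explicit evaluation of the third-to-fourth displacement and of $\mathbf{w}$ in terms of $V_{n-2}$ and $V_{n-3}$, and (ii) an actual use of the inductive hypothesis to eliminate $V_{n-3}$ from the resulting three-term relation.
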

\begin{proof}
We will prove by induction. For $n=3$, it is true. Assume it is so for some $n-1\geq 3$.  In the layout for hat-$n$, if we only focus on the supervectors of generations $n-3$ to $n$ (Figure~\ref{fig:5}), then we can see 
\[V_n = 2 X_{n-2} + W_{n-2} + U_{n-2} +V_{n-2} = 5 V_{n-2} + W_{n-2} + U_{n-2},\]
where we've used the fact that $X_{n-2} = 2 V_{n-2}$ from the half-hexagon shape. And because the fourth hat-$(n-1)$ rotates $120$ degrees counterclockwise with respect to the first hat-$(n-1)$, $Z_{n-2}$ is $W_{n-2}$ rotating $120$ degrees counterclockwise. Similarly, because the fifth hat-$(n-1)$ rotates $120$ degrees counterclockwise with respect to the fourth hat-$(n-1)$, $Y_{n-2}$ is $Z_{n-2}$ rotating $120$ degrees counterclockwise. Therefore, $U_{n-2}$ is $W_{n-2}$ rotating $60$ degrees counterclockwise.
\begin{figure}[h!tbp]
\centering
\begin{minipage}[b]{0.45\textwidth} 
\centering
	\includegraphics[scale = 0.1]{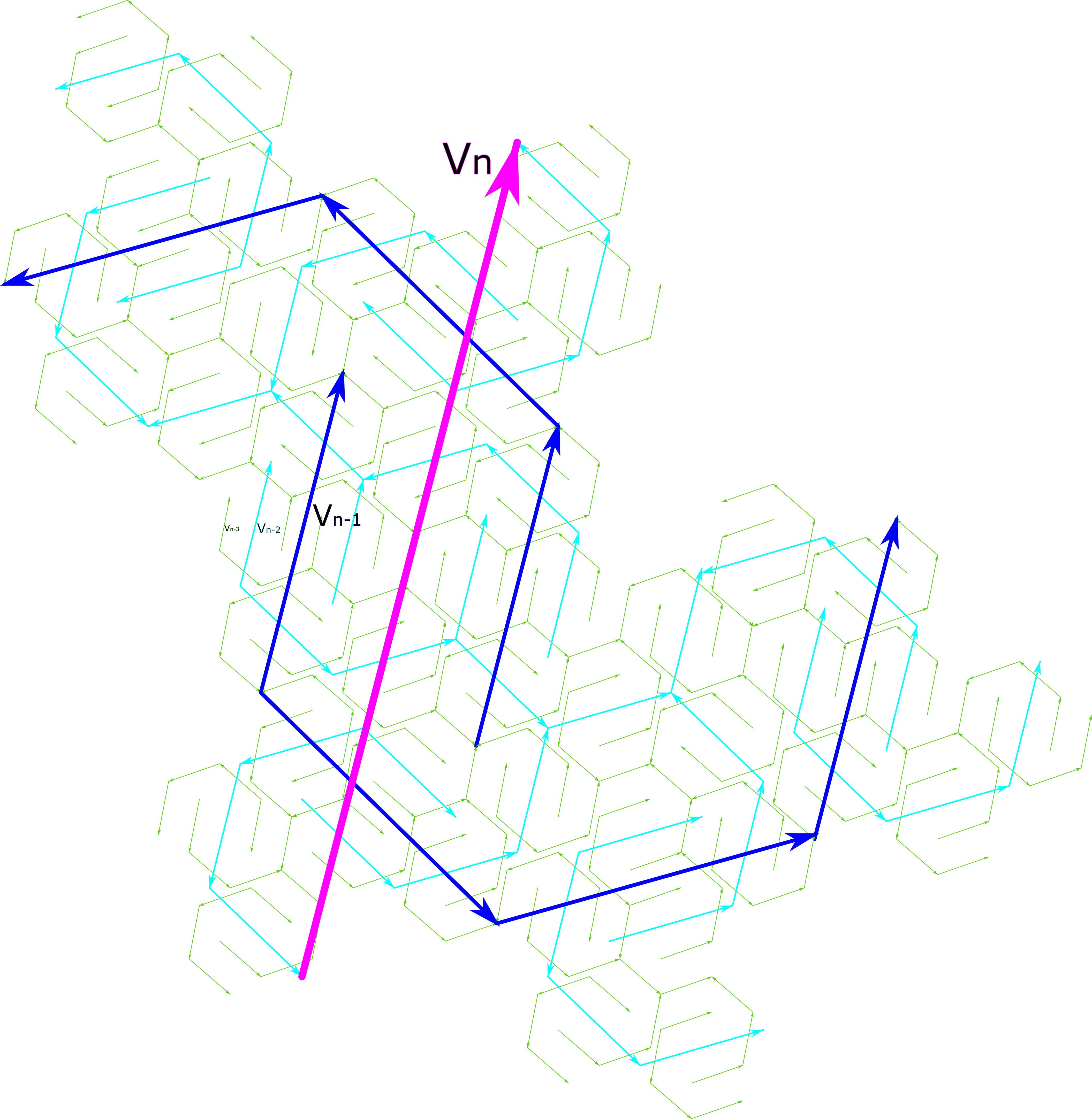}
        	\subcaption{Supervectors in the hat-$n$ layout}
        	\label{fig:5a}
\end{minipage}
\begin{minipage}[b]{0.45\textwidth} 
\centering
	\includegraphics[scale = 0.15]{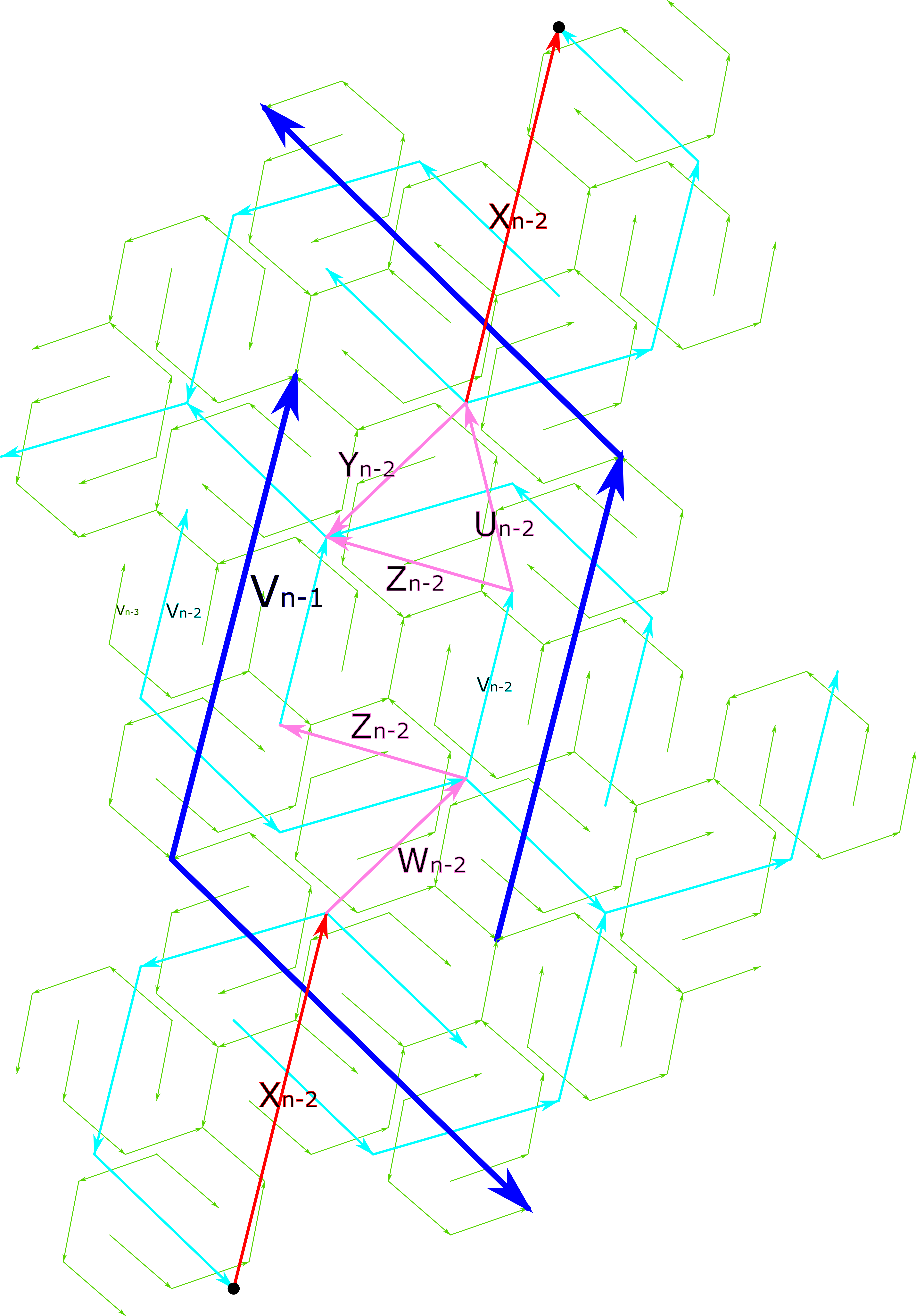}
        	\subcaption{Decomposition of $V_n$} 
	\label{fig:5b}
\end{minipage}
\caption{Supervectors in the hat-$n$ layout.}
\label{fig:5}
\end{figure}

For simplicity, we will use complex numbers for vector manipulation. Each $V_n$ has its corresponding complex number $v_n$, and likewise for the other vectors. Rotating $m\times 60$ degrees counterclockwise is simply multiplication by $e^{im\pi/3}$. We then have
\[v_n = 5 v_{n-2} + (1+e^{i\pi/3}) w_{n-2}.\]
To calculate $w_{n-2}$, let's move $W_{n-2}$ to a new location (Figure~\ref{fig:6a}). From this new location, we get
\[w_{n-2} + a_{n-2} = (1+ e^{-i\pi/3})v_{n-2}.\]
$A_{n-2}$ can be moved in parallel to a new location (Figure~\ref{fig:6b}), from which we get
\[a_{n-2} = (1+ e^{-i\pi/3})v_{n-3}.\]
\begin{figure}[h!tbp]
\centering
\begin{minipage}[b]{0.35\textwidth} 
\centering
	\includegraphics[scale = 0.25]{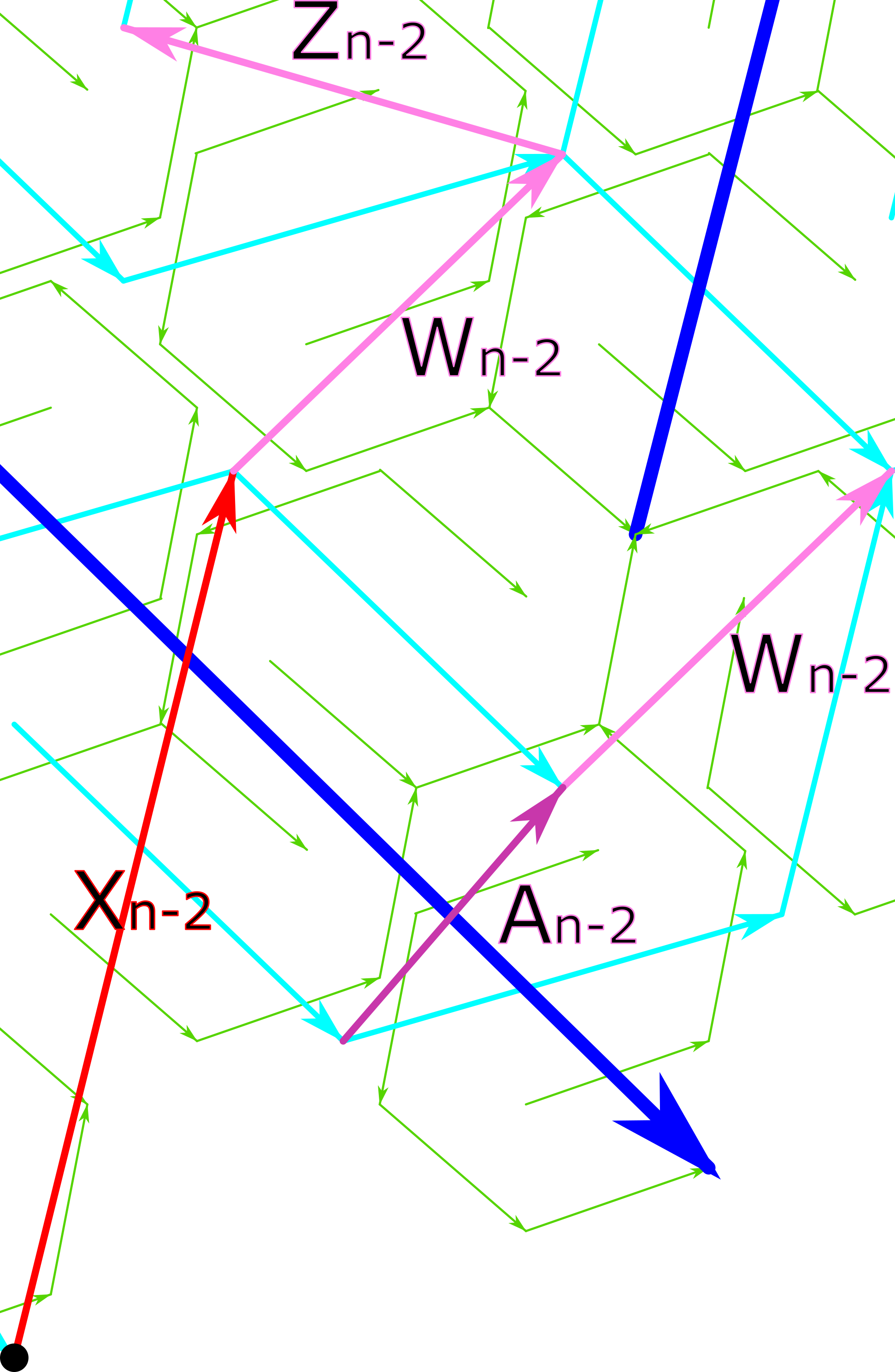}
        	\subcaption{$W_{n-2}$}
        	\label{fig:6a}
\end{minipage}
\hspace{3em}
\begin{minipage}[b]{0.35\textwidth} 
\centering
	\includegraphics[scale = 0.25]{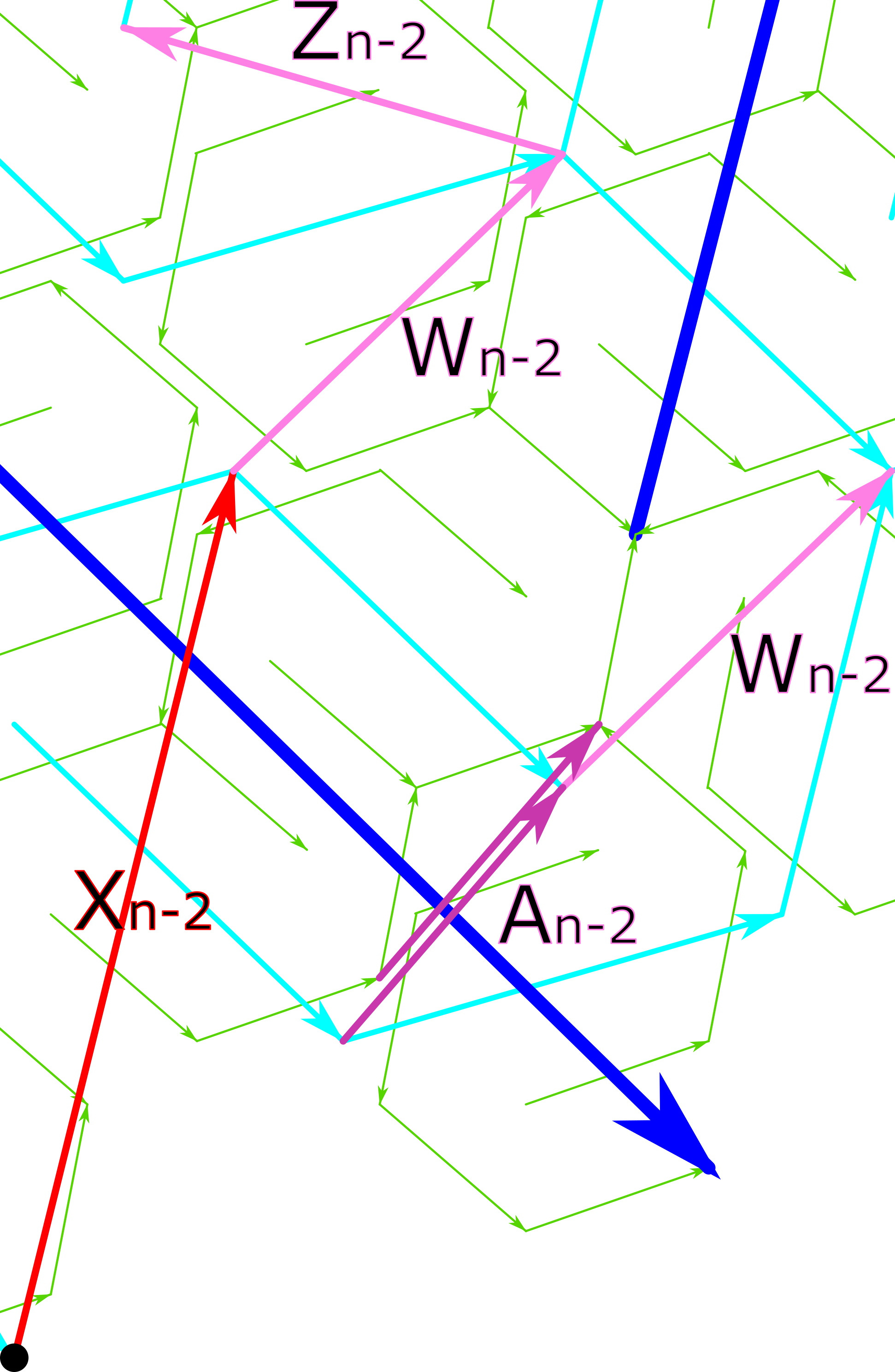}
        	\subcaption{$A_{n-2}$}
        	\label{fig:6b}
\end{minipage}
\caption{Parallel movement of vectors.}
\label{fig:6}
\end{figure}

Combining all facts, we get 
\[v_n = 5 v_{n-2} + (1+e^{i\pi/3})(1+ e^{-i\pi/3})(v_{n-2} - v_{n-3}) = 8 v_{n-2} -3 v_{n-3}.\]
By induction, 
\[v_{n-1} = 3 v_{n-2} - v_{n-3}.\]
Therefore,
\begin{equation}
v_{n} = 3 v_{n-1} - v_{n-2}.
\label{eqn:2}
\end{equation}
\end{proof}
We can start the sequence $\{V_n\}$ from $V_0 = 3V_1 - V_2 = (0, 2\sqrt{3})$.  Figures in this paper are oriented so that $V_0$ points up.

\section*{A Hidden Sequence}
From Lemma~\ref{lem:1} and the initial values $V_0$ and $V_1$, one can get all $V_n$. Once again we will work with complex numbers. 
The characteristic polynomial of Equation~(\ref{eqn:2}) 
has solutions 
\[\lambda_{1,2} = \frac12\left(3\pm\sqrt{5}\right) =\varphi^{\pm 2},\]
where $\varphi$ is the golden ratio. Hence
\[v_n = a \varphi^{2n} + b \varphi^{-2n}.\]
Since $v_0 = 2\sqrt{3} i$ and $v_1 = 1 + 3\sqrt{3}i$,
we have 
\[a = \frac{1}{\sqrt{5}} + \sqrt{3}i,\quad b = -\frac{1}{\sqrt{5}} + \sqrt{3}i,\]
and
\begin{equation}
v_n = \left(\frac{1}{\sqrt{5}} + \sqrt{3}i\right)\varphi^{2n} + \left(-\frac{1}{\sqrt{5}} + \sqrt{3}i\right)\varphi^{-2n} = F_{2n}+ \sqrt{3}L_{2n} i, 
\label{eqn:3}
\end{equation}
where $\{F_n\}$ is the Fibonacci sequence and $\{L_n\}$ is the Lucas sequence. 
The angle $V_n$ rotates from $V_0$ clockwise is 
\begin{equation}
\theta_n = \arctan\frac{\varphi^{2n}-\varphi^{-2n}}{\sqrt{15}(\varphi^{2n}+\varphi^{-2n})}
= \arctan \left(\frac{F_{2n}}{\sqrt{3}L_{2n}}\right), \quad \forall n\geq 0.
\label{eqn:theta}
\end{equation}
We have the total rotation of all generations 
\begin{equation}
\lim_{n\to\infty} \theta_n = \arccot(\sqrt{15}) = \arcsin\frac{1}{4}.
\label{eqn:5}
\end{equation}
The angle each $V_n$ rotates from the previous $V_{n-1}$ can be calculated as
\[
\alpha_n = \theta_n - \theta_{n-1} = \arccot \left(\sqrt{3}\frac{8L_{4n-2}+21}{15}\right), \quad \forall n\geq 1.
\]
Define
\begin{equation}
G_n  = \frac{8L_{4n-2}+21}{15}, \quad\forall n\geq 1.
\label{eqn:4}
\end{equation}
From the recurrence relation of $\{L_n\}$, it can be calculated that
\[G_n = 7 G_{n-1} - G_{n-2} - 7, \quad\forall n\geq 3, \]
with $G_1 = 3$ and $G_2 = 11$. $\{G_n\}$ is an integer sequence.
\begin{proposition} 
Let the short edge of the original hat tile be 1, the supervector 
\[ V_n = (F_{2n}, \sqrt{3}L_{2n}), \quad\forall n\geq 0.\]
The $n$-th generation of supertiles rotates from the $(n-1)$-th by an angle of 
\[\arccot(G_n\sqrt{3}),\]
with $G_n$ defined in Equation~(\ref{eqn:4}).
\end{proposition}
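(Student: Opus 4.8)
The plan is to assemble the two assertions from ingredients already present in the text. For the supervector formula, I would start from Lemma~\ref{lem:1}, which gives $V_n = 3V_{n-1} - V_{n-2}$, together with the construction data $V_0 = (0, 2\sqrt{3})$ and $V_1 = (1, 3\sqrt{3})$. Since $(F_0, L_0) = (0, 2)$ and $(F_2, L_2) = (1, 3)$ match these initial values, and since the roots $\varphi^{\pm 2}$ of the characteristic polynomial $x^2 - 3x + 1$ satisfy $\varphi^2 + \varphi^{-2} = L_2 = 3$ and $\varphi^{2}\cdot\varphi^{-2} = 1$, both even-indexed sequences $\{F_{2n}\}$ and $\{L_{2n}\}$ obey the same recurrence $x_n = 3x_{n-1} - x_{n-2}$; a one-line induction then gives $V_n = (F_{2n}, \sqrt{3}L_{2n})$ for all $n\geq 0$. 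Equivalently, this is the content of Equation~(\ref{eqn:3}), read off from Binet's formulas $\sqrt{5}\,F_{2n} = \varphi^{2n} - \varphi^{-2n}$ and $L_{2n} = \varphi^{2n} + \varphi^{-2n}$.

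For the rotation claim, the figures are drawn with $V_0$ pointing straight up and $F_{2n}, L_{2n} > 0$ for $n\geq 1$, so the clockwise angle from $V_0$ to $V_n$ is $\theta_n = \arctan\frac{F_{2n}}{\sqrt{3}L_{2n}}$, in agreement with Equation~(\ref{eqn:theta}). Writing $\frac{F_{2n}}{L_{2n}} = \frac{1}{\sqrt{5}}\cdot\frac{\varphi^{4n}-1}{\varphi^{4n}+1}$ shows $\theta_n$ is strictly increasing in $n$ with limit $\arccot\sqrt{15} = \arcsin\frac{1}{4} < \frac{\pi}{2}$, so $\alpha_n := \theta_n - \theta_{n-1}$ lies in $(0, \frac{\pi}{2})$ and is pinned down by its cotangent. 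I would then evaluate $\cot\alpha_n$ via the tangent subtraction formula: with $t_k = F_{2k}/(\sqrt{3}L_{2k})$,
\[
\tan\alpha_n = \frac{t_n - t_{n-1}}{1 + t_n t_{n-1}} = \frac{\sqrt{3}\,(F_{2n}L_{2n-2} - F_{2n-2}L_{2n})}{3\,L_{2n}L_{2n-2} + F_{2n}F_{2n-2}}.
\]
Applying the classical identities $F_aL_b - F_bL_a = 2(-1)^bF_{a-b}$, $L_aL_b = L_{a+b} + (-1)^bL_{a-b}$, and $5F_aF_b = L_{a+b} - (-1)^bL_{a-b}$ with $a = 2n$ and $b = 2n-2$, the numerator collapses to $2\sqrt{3}\,F_2 = 2\sqrt{3}$ and the denominator to $3(L_{4n-2}+3) + \frac{1}{5}(L_{4n-2}-3) = \frac{2}{5}(8L_{4n-2}+21)$; hence $\cot\alpha_n = \frac{8L_{4n-2}+21}{5\sqrt{3}} = \sqrt{3}\cdot\frac{8L_{4n-2}+21}{15} = \sqrt{3}\,G_n$, that is, $\alpha_n = \arccot(\sqrt{3}\,G_n)$.

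Finally, to justify that $\{G_n\}$ is an integer sequence I would note that $\ell_n := L_{4n-2}$ satisfies $\ell_n = 7\ell_{n-1} - \ell_{n-2}$, the relevant roots being $\varphi^{\pm 4}$ with $\varphi^4 + \varphi^{-4} = L_4 = 7$; substituting $\ell_n = \frac{1}{8}(15G_n - 21)$ and clearing denominators reproduces exactly $G_n = 7G_{n-1} - G_{n-2} - 7$, and since $G_1 = 3$ and $G_2 = 11$ are integers, so is every $G_n$. The one genuinely computational step is the trigonometric simplification of the second paragraph: the main task is to invoke precisely the right Fibonacci--Lucas product and difference identities and to keep the powers of $\sqrt{3}$ straight, while the rest is bookkeeping. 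A smaller point that still needs attention is confirming that $\alpha_n$ really lies in $(0, \frac{\pi}{2})$ so that $\arccot$ returns the intended value; this follows from the monotonicity of $\theta_n$ and the bound $\lim_{n\to\infty}\theta_n = \arcsin\frac{1}{4}$ of Equation~(\ref{eqn:5}).
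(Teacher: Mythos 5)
Your proposal is correct and follows essentially the same route as the paper: the closed form $V_n = (F_{2n}, \sqrt{3}L_{2n})$ comes from the recurrence of Lemma~\ref{lem:1} with the initial data $V_0, V_1$ (the paper solves it via the characteristic roots $\varphi^{\pm 2}$ and Binet's formulas, which is equivalent to your one-line induction), and the increment $\alpha_n = \theta_n - \theta_{n-1}$ is reduced to $\arccot(\sqrt{3}\,G_n)$ with the integrality of $G_n$ established from the recurrence $G_n = 7G_{n-1} - G_{n-2} - 7$. The paper merely asserts that $\alpha_n$ ``can be calculated,'' so your tangent-subtraction computation with the identities $F_aL_b - F_bL_a = 2(-1)^bF_{a-b}$, $L_aL_b = L_{a+b}+(-1)^bL_{a-b}$, and $5F_aF_b = L_{a+b}-(-1)^bL_{a-b}$ supplies exactly the omitted details (and checks out), and your verification that $\alpha_n\in(0,\pi/2)$ so that the $\arccot$ branch is the intended one is a point the paper glosses over.
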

The first few terms of $G_n$ are 
\[3, 11, 67, 451, 3083, 21123, 144771, 992267, 6801091, 46615363, 319506443, 2189929731, 15010001667.\]

\section*{General Aperiodic Monotiles}
For a general aperiodic $\mathrm{Tile}(a,b)$, given the same alignment as the hat tile, following the red lines in Figure~\ref{fig:8}, we can calculate
\begin{figure}[h!tbp]
\centering
\begin{minipage}[b]{0.35\textwidth} 
\centering
	\includegraphics[scale = 1]{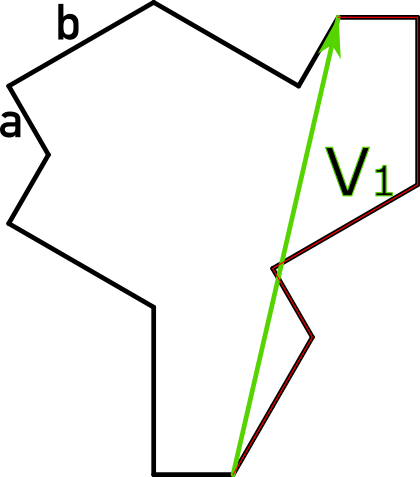}
        	\subcaption{$\mathrm{Tile}(a,b)$-1}
        	\label{fig:8a}
\end{minipage}
\hspace{3em}
\begin{minipage}[b]{0.35\textwidth} 
\centering
	\includegraphics[scale = 0.5]{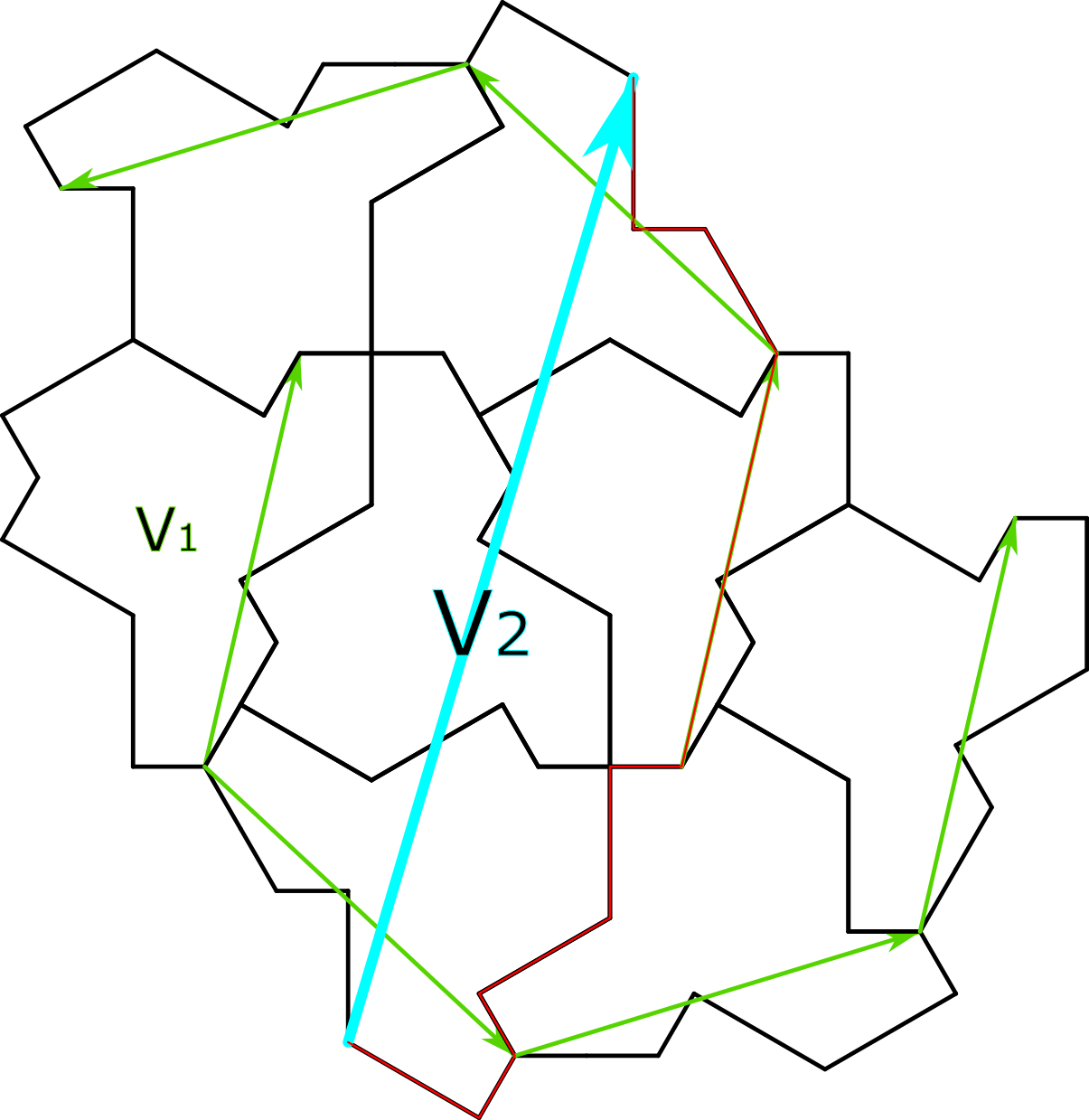}
        	\subcaption{$\mathrm{Tile}(a,b)$-2}
	\label{fig:8b}
\end{minipage}
\begin{minipage}[b]{0.8\textwidth} 
\centering
	\includegraphics[scale = 0.45]{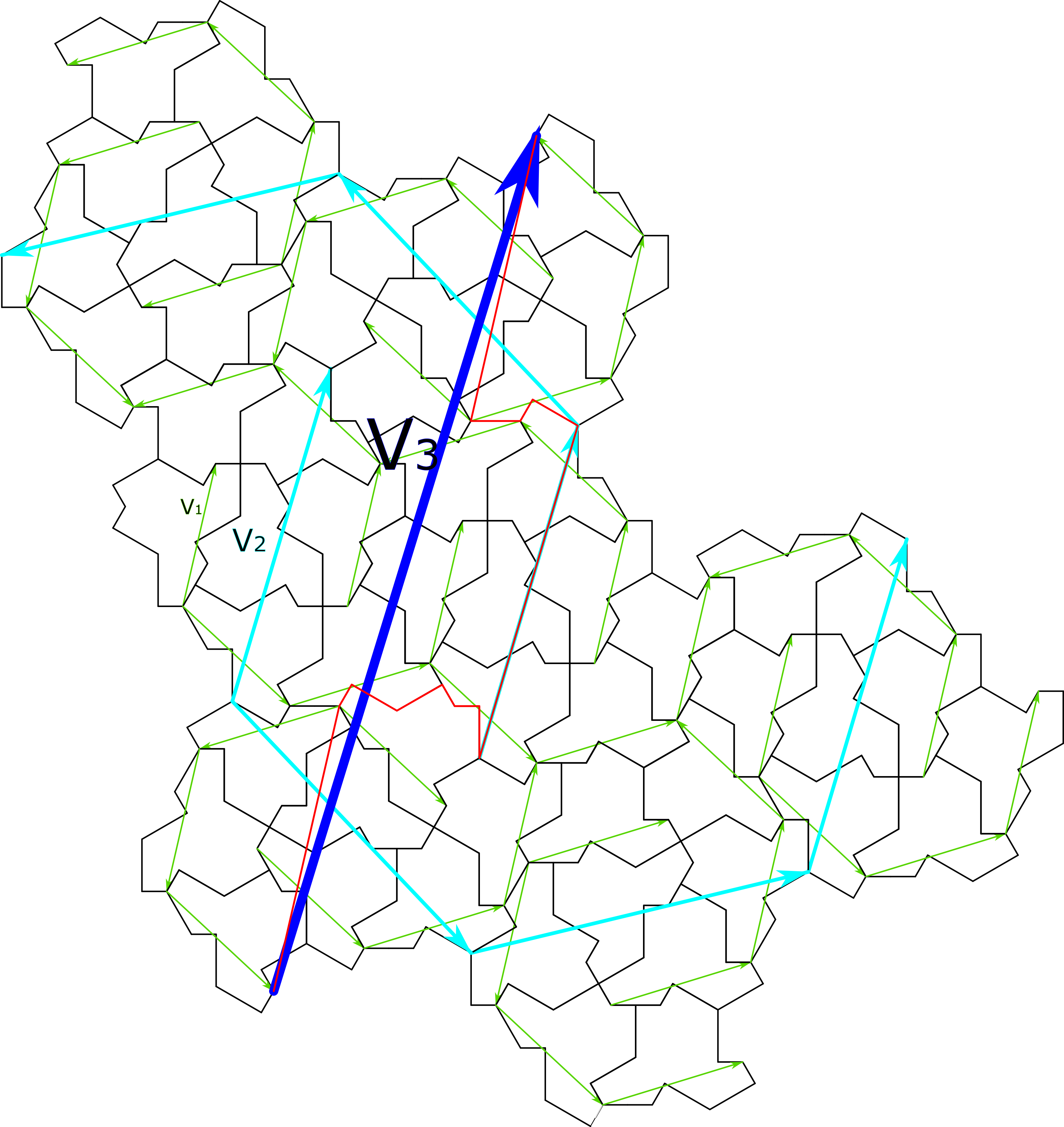}
        	\subcaption{$\mathrm{Tile}(a,b)$-3}
	\label{fig:8c}
\end{minipage}
\caption{The first three generations of supertiles for $\mathrm{Tile}(a,b)$.}
\label{fig:8}
\end{figure}
\[V_1 = \left(s, 3t\right),\quad V_2 = V_1 + \left(2s, 4t\right) = (3s, 7t),\]
and
\[V_3 = V_2 + 4 V_1 + \left(s, -t\right) = (8s, 18t),\]
where $s =(\sqrt{3}b-a)/2$ and $t = (\sqrt{3}a + b)/2$. It's straightforward to check 
\[V_3 = 3 V_2 - V_1.\]
Since the proof of Lemma~\ref{lem:1} only depends on this initial condition and the hexagonal structure of the supervectors, it is valid for $\mathrm{Tile}(a,b)$ as well.
\begin{lemma}
For any aperiodic $\mathrm{Tile}(a,b)$, with the supertiles and supervectors defined the same way as for the hat tile, we have 
\[V_n = 3 V_{n-1} - V_{n-2}, \quad \forall n\geq 3.\]
\label{lem:2}
\end{lemma}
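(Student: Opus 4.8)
The plan is to observe that the proof of Lemma~\ref{lem:1} separates cleanly into two ingredients: a numerical base case ($n=3$), and a sequence of geometric identities about the layout of a supertile --- the half-hexagon relation $X_{n-2}=2V_{n-2}$, the $120^\circ$ rotation relations among the supervectors of the fourth and fifth sub-supertiles, and the two parallel-translation identities of Figure~\ref{fig:6}. The first ingredient I would re-verify with the data of $\mathrm{Tile}(a,b)$; the second I would argue carries over unchanged, because it refers only to the combinatorial substitution rule (which tile sits where, at what orientation), and that rule is the same for every member of the hat family.

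First I would dispose of the base case. The excerpt has already read off $V_1=(s,3t)$, $V_2=(3s,7t)$, $V_3=(8s,18t)$ from the red paths in Figure~\ref{fig:8}, so a one-line check gives $3V_2-V_1=(9s-s,\;21t-3t)=(8s,18t)=V_3$, and the relation holds at $n=3$. Then I would run the induction exactly as before: assuming $V_{n-1}=3V_{n-2}-V_{n-3}$ for some $n-1\ge 3$, I would reproduce the decomposition $V_n=2X_{n-2}+W_{n-2}+U_{n-2}+V_{n-2}=5V_{n-2}+W_{n-2}+U_{n-2}$ of Figure~\ref{fig:5}, pass to complex numbers, and invoke $v_n=5v_{n-2}+(1+e^{i\pi/3})w_{n-2}$ together with $w_{n-2}+a_{n-2}=(1+e^{-i\pi/3})v_{n-2}$ and $a_{n-2}=(1+e^{-i\pi/3})v_{n-3}$ to obtain $v_n=8v_{n-2}-3v_{n-3}$. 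Combining this with the induction hypothesis $v_{n-1}=3v_{n-2}-v_{n-3}$ yields $v_n=3v_{n-1}-v_{n-2}$, which closes the induction.

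The step that actually needs care --- and where I would spend the most words --- is justifying that the geometric identities imported from the proof of Lemma~\ref{lem:1} are genuinely independent of $a$ and $b$. Here the key remark is that all the tiles $\mathrm{Tile}(a,b)$ share a single combinatorial substitution system: the incidence pattern and the relative rotations of the sub-supertiles inside a supertile are fixed across the whole family, and $(a,b)$ enters only through the shape of the prototile, hence only through the initial supervector $V_1$. Consequently the half-hexagon relation $X_{n-2}=2V_{n-2}$, the $120^\circ$ rotation relations, and the parallel translations of Figure~\ref{fig:6} are statements about the assembly diagram, not about the metric shape, so they remain valid verbatim; everything downstream of $V_1$ is $\mathbb{R}$-linear in $(s,t)$ with tile-independent coefficients. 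With this observation in place, the proof of Lemma~\ref{lem:1} applies word for word, giving Lemma~\ref{lem:2}.
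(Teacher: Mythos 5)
Your proposal matches the paper's argument: the paper likewise verifies $V_3=3V_2-V_1$ from the explicit values $V_1=(s,3t)$, $V_2=(3s,7t)$, $V_3=(8s,18t)$ and then notes that the proof of Lemma~\ref{lem:1} depends only on this initial condition and on the hexagonal structure of the supervectors, which is shared across the whole family. Your extra paragraph justifying why the combinatorial substitution identities are independent of $(a,b)$ is a more explicit rendering of the same point, not a different route.
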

\begin{figure}[h!tbp]
\centering
\begin{minipage}[b]{0.35\textwidth} 
\centering
	\includegraphics[scale = 1]{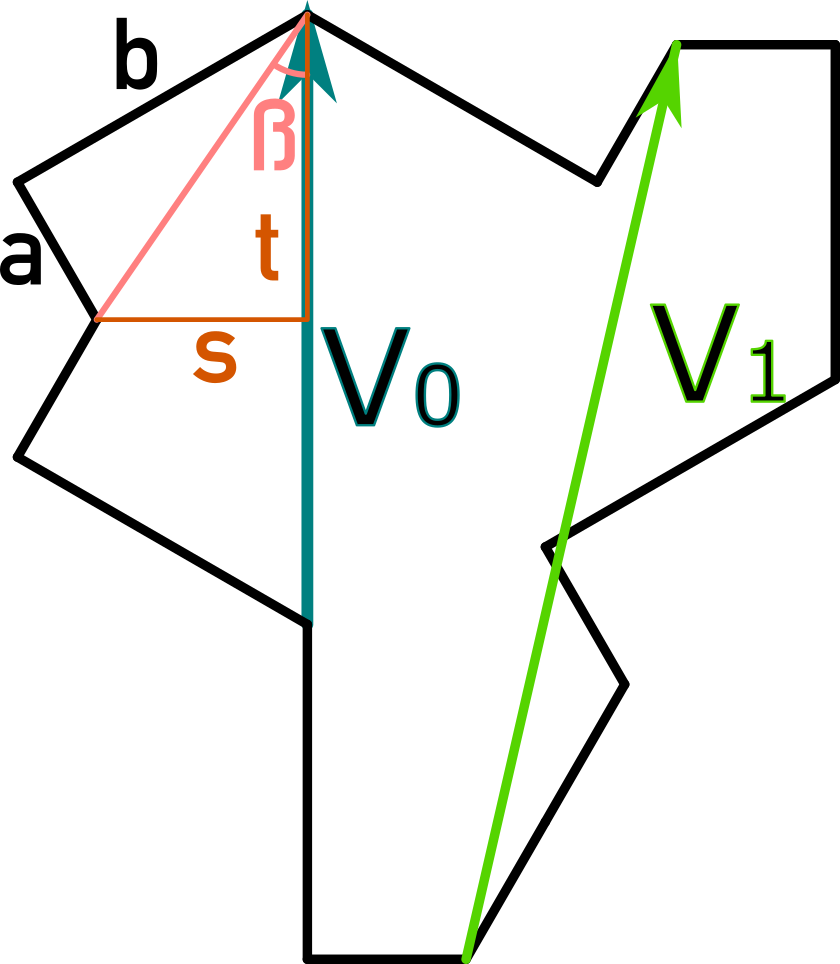}
        	\subcaption{$s, \beta>0$}
        	\label{fig:7a}
\end{minipage}
\hspace{3em}
\begin{minipage}[b]{0.35\textwidth} 
\centering
	\includegraphics[scale = 1]{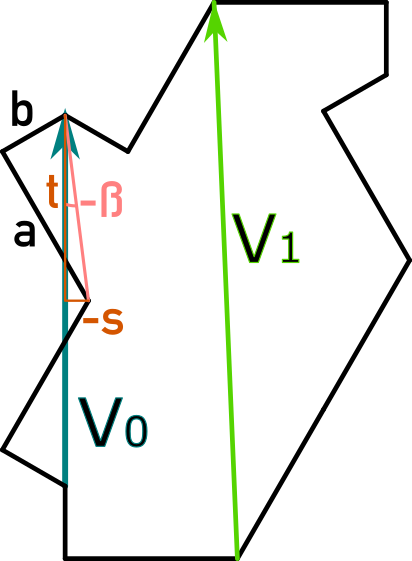}
        	\subcaption{$s, \beta<0$} 
	\label{fig:7b}
\end{minipage}
\caption{Marked $\mathrm{Tile}(a,b)$ tiles.}
\label{fig:7}
\end{figure}
Let $V_0 = 3V_1 - V_2 = (0,2t)$. Figure~\ref{fig:7} shows drawings of two generic cases where $s>0$ and $s<0$, respectively. Let $\beta = \arctan(s/t)$. $\beta$ is positive (negative resp.) when $V_1$ tilts to the right (left resp.) of $V_0$. We are considering the cases where $a>0, b>0$ and $a\neq b$. This implies $\beta \in (-\pi/6, \pi/12)\cup(\pi/12, \pi/3)$. When $\beta = 0$, $V_1$ align with $V_0$, hence all $\{V_n\}$ are parallel. 
 
The calculation of $\{V_n\}$ follows the same line as in the hat tile, and the result is
\begin{proposition} 
For any aperiodic $\mathrm{Tile}(a,b)$, with the supertiles and supervectors defined the same way as for the hat tile, we have 
\[ V_n = \left(F_{2n}~s, L_{2n}~t\right), \quad\forall n\geq 0.\]
The $n$-th generation of supertiles rotates clockwise from the $(n-1)$-th by an angle of 
\[\arctan\left(s/(G_n~t)\right) = \arctan\left(\tan\beta/G_n\right).\]
The total rotation of all generations is
\[\arctan\left(s/(\sqrt{5}~t)\right) = \arctan\left(\tan\beta/\sqrt{5}\right).\]
\end{proposition}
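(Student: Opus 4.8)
The plan is to run the hat-tile argument almost verbatim, since Lemma~\ref{lem:2} hands us the recurrence $V_n=3V_{n-1}-V_{n-2}$ together with the seeds $V_0=(0,2t)$, $V_1=(s,3t)$, $V_2=(3s,7t)$, $V_3=(8s,18t)$, which is everything the hat proof used. I would first establish the closed form $V_n=(F_{2n}s,L_{2n}t)$. One route is induction: the cases $n=0,1$ are $(F_0,L_0)=(0,2)$ and $(F_2,L_2)=(1,3)$, and for the step one uses that the even-index subsequences obey $F_{2n}=3F_{2n-2}-F_{2n-4}$ and $L_{2n}=3L_{2n-2}-L_{2n-4}$ (both have characteristic polynomial $x^2-3x+1$, roots $\varphi^{\pm2}$), so Lemma~\ref{lem:2} pins $V_n$ coordinatewise. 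Closer to the spirit of the hidden-sequence section, I would instead solve the recurrence outright: writing $v_n=A\varphi^{2n}+B\varphi^{-2n}$ and fitting $v_0=2ti$, $v_1=s+3ti$ gives $A=\tfrac{s}{\sqrt5}+ti$, $B=-\tfrac{s}{\sqrt5}+ti$, whence
\[
v_n=\tfrac{s}{\sqrt5}\big(\varphi^{2n}-\varphi^{-2n}\big)+ti\big(\varphi^{2n}+\varphi^{-2n}\big)=sF_{2n}+tL_{2n}\,i .
\]

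For the rotations I would stay in this complex form. Since $V_0$ points straight up and both coordinates of $V_n$ are positive, the clockwise angle of $V_n$ from $V_0$ is
\[
\theta_n=\arctan\frac{sF_{2n}}{tL_{2n}}=\arctan\Big(\tan\beta\cdot\frac{F_{2n}}{L_{2n}}\Big).
\]
The total rotation then comes for free: $F_{2n}/L_{2n}\to1/\sqrt5$, so $\lim_n\theta_n=\arctan(\tan\beta/\sqrt5)=\arctan\big(s/(\sqrt5\,t)\big)$, and this limit is also $\sum_{n\ge1}(\theta_n-\theta_{n-1})$ by telescoping. For the per-generation increment $\alpha_n=\theta_n-\theta_{n-1}$ I would apply the tangent subtraction formula, clear denominators, and reduce to
\[
\tan\alpha_n=\frac{st\,\big(F_{2n}L_{2n-2}-F_{2n-2}L_{2n}\big)}{t^2L_{2n}L_{2n-2}+s^2F_{2n}F_{2n-2}},
\]
and then collapse the products to single indices via $F_{2n}L_{2n-2}-F_{2n-2}L_{2n}=2$, $L_{2n}L_{2n-2}=L_{4n-2}+3$, and $5F_{2n}F_{2n-2}=L_{4n-2}-3$.

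The main obstacle is this last step — recognizing the outcome as $\tan\beta/G_n$. Carrying the reduction through gives
\[
\tan\alpha_n=\frac{10\,st}{(s^2+5t^2)\,L_{4n-2}+3(5t^2-s^2)},
\]
and matching this against $s/(tG_n)$ with $G_n=(8L_{4n-2}+21)/15$ forces $(s^2+5t^2):(5t^2-s^2)=16:14$, i.e. $t^2=3s^2$. That is exactly the hat and its rescalings $\mathrm{Tile}(a,\sqrt3\,a)$ (where $(s,t)\propto(1,\sqrt3)$), in which case $\alpha_n$ collapses to $\arctan\big(1/(\sqrt3\,G_n)\big)$ as in the earlier proposition. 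For a generic $\mathrm{Tile}(a,b)$ it does not hold (e.g. $s=1$, $t=2$ gives $\tan\alpha_2=4/87$, not $1/22$), and correspondingly $\sum_n\arctan(\tan\beta/G_n)\neq\arctan(\tan\beta/\sqrt5)$. So I would state the increment in the two-parameter form above — which still telescopes to the stated total $\arctan(\tan\beta/\sqrt5)$ — and keep the $G_n$-formula for the case $b=\sqrt3\,a$. Everything else in the statement, namely $V_n=(F_{2n}s,L_{2n}t)$ and the total rotation, goes through unchanged.
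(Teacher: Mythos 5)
Your derivation of the closed form $V_n=(F_{2n}\,s,\,L_{2n}\,t)$ and of the total rotation $\arctan\bigl(s/(\sqrt5\,t)\bigr)$ is exactly the paper's route: the paper offers no separate proof here beyond ``the calculation follows the same line as in the hat tile,'' and you have carried that line out (solve the recurrence of Lemma~\ref{lem:2} with roots $\varphi^{\pm2}$, fit $v_0=2ti$, $v_1=s+3ti$, apply Binet, then take $F_{2n}/L_{2n}\to1/\sqrt5$). Those parts are correct and need no comment.

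The substantive content of your proposal is the objection to the per-generation increment, and you are right: the formula $\arctan\bigl(\tan\beta/G_n\bigr)$ in the proposition does not follow from the stated calculation for general $(a,b)$. I checked your three product identities ($F_{2n}L_{2n-2}-F_{2n-2}L_{2n}=2$, $L_{2n}L_{2n-2}=L_{4n-2}+3$, $5F_{2n}F_{2n-2}=L_{4n-2}-3$), your resulting expression $\tan\alpha_n=10st\big/\bigl((s^2+5t^2)L_{4n-2}+3(5t^2-s^2)\bigr)$, and the matching condition: equating this to $s/(tG_n)$ with $G_n=(8L_{4n-2}+21)/15$ forces $t^2=3s^2$, i.e.\ $b=\sqrt3\,a$, which is the hat up to scale. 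Your numerical counterexample ($s=1$, $t=2$, realizable with $a=\sqrt3-\tfrac12$, $b=1+\tfrac{\sqrt3}{2}$, giving $\tan\alpha_2=4/87\neq1/22$) is correct. There is also an internal-consistency check that confirms your reading: the increments must telescope to the total, but $\sum_{n\ge1}\arctan(\tan\beta/G_n)\approx\tan\beta\sum_n 1/G_n\approx0.4418\,\tan\beta$ for small $\beta$, whereas the (correct) total is $\arctan(\tan\beta/\sqrt5)\approx0.4472\,\tan\beta$; so the two displayed claims of the proposition cannot both hold for generic $\beta$. The error in the paper is the illegitimate substitution of $\tan\beta$ for $1/\sqrt3$ \emph{inside} a formula that was derived only after a tangent-subtraction step, which does not scale linearly in $\tan\beta$. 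Your corrected two-parameter increment, together with the observation that the $G_n$ form survives exactly when $b=\sqrt3\,a$, is the right statement; the closed form for $V_n$ and the total rotation stand as written.
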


If we add $V_0$ to the two-tile-compound as in Figure~\ref{fig:9a}, then Figure~\ref{fig:9b} shows that $V_0$ form the same hexagonal structure as $V_1$ in the second generation supertile. However, the structure of $V_0$ is not exactly the same as all other $V_n$'s, as Figure~\ref{fig:9c} shows. 
\begin{figure}[h!tbp]
\centering
\begin{minipage}[b]{0.4\textwidth} 
\centering
	\includegraphics[scale = 1]{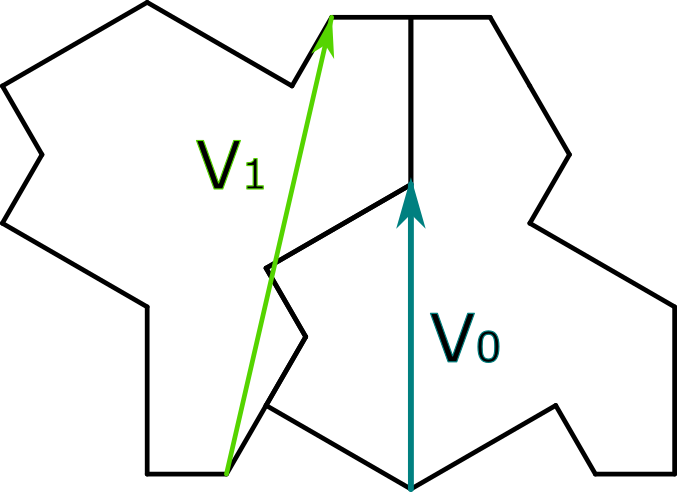}
        	\subcaption{Marked two-$\mathrm{Tile}(a,b)$-compound}
        	\label{fig:9a}
\end{minipage}
\begin{minipage}[b]{0.5\textwidth} 
\centering
	\includegraphics[scale = 0.6]{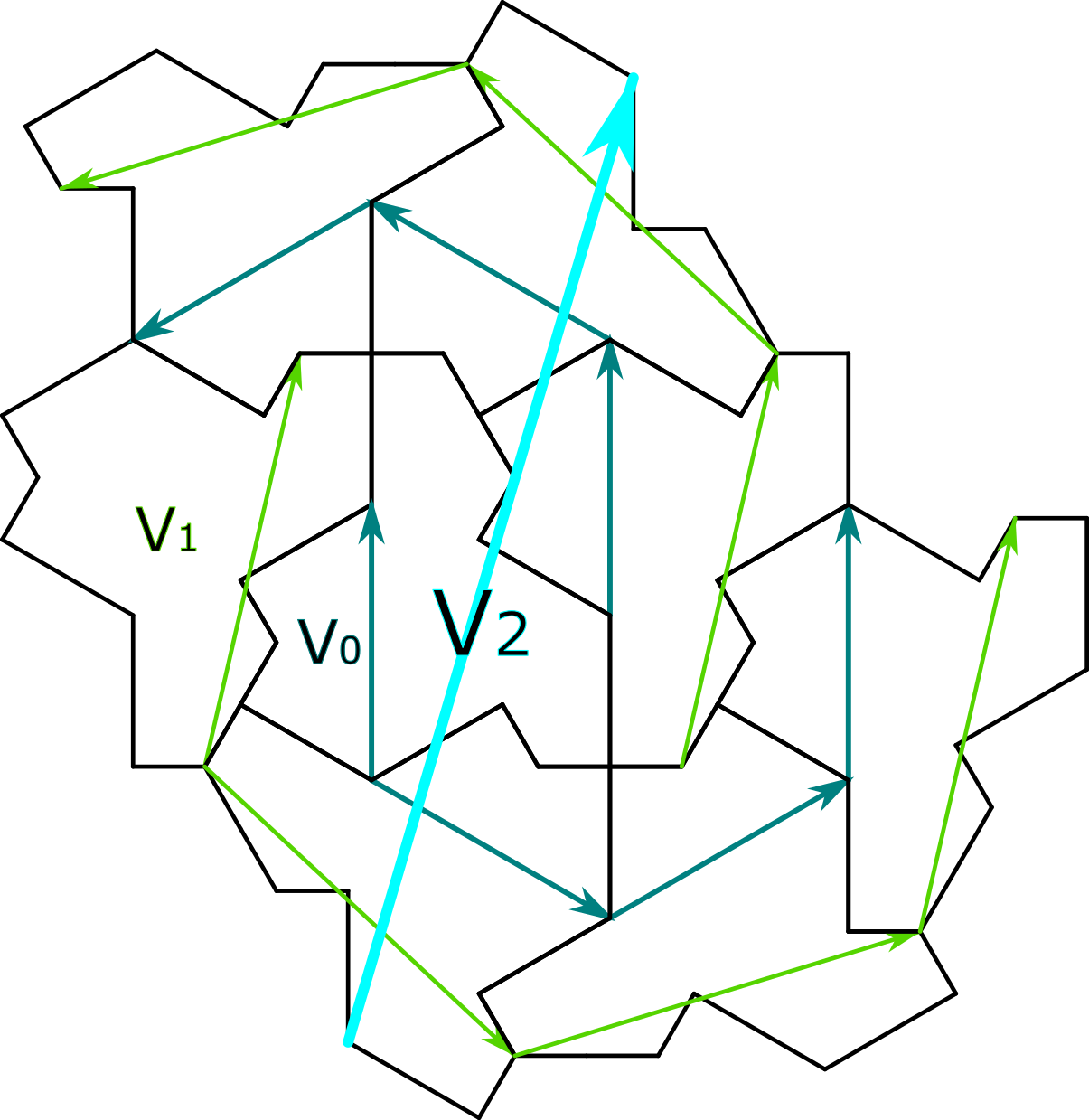}
        	\subcaption{Second generation supertile} 
	\label{fig:9b}
\end{minipage}
\begin{minipage}[b]{0.8\textwidth} 
\centering
	\includegraphics[scale = 0.45]{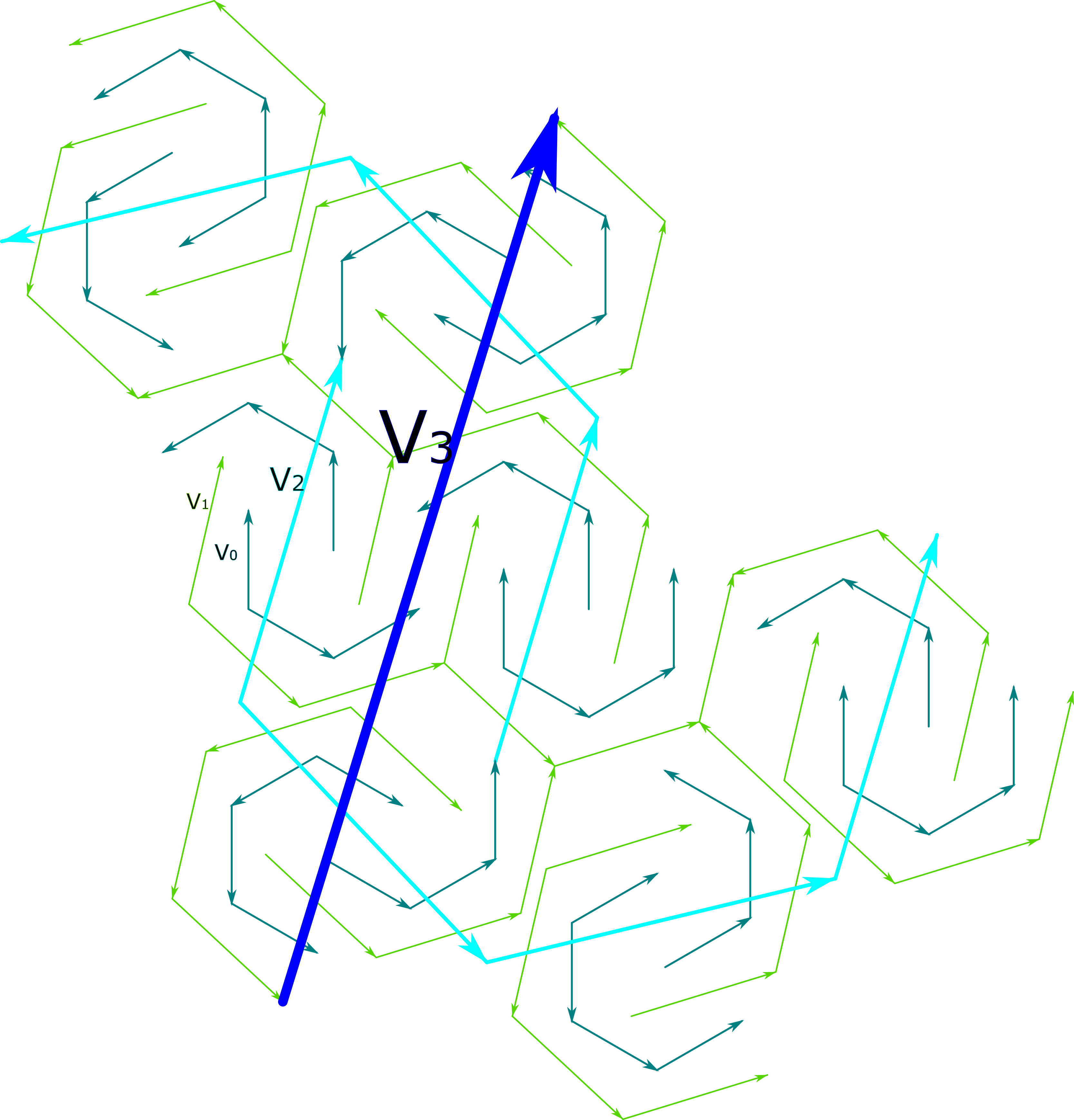}
        	\subcaption{Supervectors in the third generation supertile} 
	\label{fig:9c}
\end{minipage}
\caption{Hexagonal structure for $V_0$.}
\label{fig:9}
\end{figure}
\section*{Conclusions}
The supervectors of the supertiles take an elegant form involving Fibonacci and Lucas sequences, indicating yet another deep combinatorial meaning of aperiodic monotiles in the hat family. A new integer sequence linearly related to the Lucas sequence is in the rotational angles of the supertiles. In the fractal limit, the supertiles scale by a factor of $\varphi^2$ after each iteration, and the tangent of the incremental rotational angles scales by a factor of $\varphi^{-4}$. The meaning of the tangent of the such angles need to be explored.

\section*{Acknowledgments}
Thanks to G4G15 hands-on activities and Chaim Goodman-Strauss for inspiring this work and Haoran Chen for providing feedback.

    
{\setlength{\baselineskip}{13pt} 
\raggedright			

} 
\end{document}